\documentclass{amsproc}

\usepackage{stmaryrd}
\usepackage{graphicx}
\usepackage{amssymb}
\usepackage{amscd}
\usepackage{amsmath}
\usepackage{amsfonts}
\usepackage{amsbsy}
\usepackage{epsfig}
\usepackage{tikz}
\usepackage{cite}
\usepackage{indentfirst}
\usepackage{multicol}
\usepackage{wasysym}
\usepackage{bbm}
\usepackage{hyperref}
\usepackage{longtable,booktabs}

\newtheorem{thm}{Theorem}[section]%
\newtheorem{lem}[thm]{Lemma}%
\newtheorem{cor}[thm]{Corollary}%
\theoremstyle{definition}
\newtheorem{pro}[thm]{Proposition}%
\newtheorem{exa}[thm]{Example}%
\theoremstyle{remark}
\newtheorem{rem}[thm]{Remark}%

\allowdisplaybreaks[4]

\numberwithin{equation}{section}

\begin{document}

\title[Coset representatives of  modular group and continued fraction]
{Coset representatives corresponding to Yetter-Drinfeld modules of  modular group and continued fraction}

\author{Yiwei Zheng}
\address{School of Mathematics, Hangzhou Normal University,
Hangzhou 311121, China}
\email{zhengyiwei@hznu.edu.cn}

\subjclass[2010]{11F06,11A55,16T05}

\keywords{modular group, conjugacy class, Yetter-Drinfeld module, continued fraction}

\begin{abstract}
We give complete conjugacy classes of modular group $\rm{SL(2,\mathbb{Z})}$. 
Particularly, the conjugacy classes of hyperbolic elements are 
decided by the proper equivalence classes of indeﬁnite forms, and we give an example.
Finally, we describe the coset representatives of centralizer of $S$, $ST$, $T$ and hyperbolic elements of $\rm{SL(2,\mathbb{Z})}$
by regular continued fraction.
In conclusion, there is not ﬁnite dimensional
Nichols algebras over modular group.

\end{abstract}

\maketitle

\section{Introduction}

In recent years, Yetter-Drinfeld modules play an important role 
in the classification of finite-dimensional (co-)pointed Hopf algebras as the first step of lifting method\cite{AS}.
Let $G$ be a group. There are many classiﬁcation results  when $G$ is a finite simple group  \cite{ACG}, 
\cite{ACG16}, \cite{ACG17}, \cite{ACG20}, \cite{ACG21},
when $G$ is an abelian group \cite{AAH}, and $G$ is the infinite dihedral group $\mathbb{D}_\infty$\cite{Z}.

Modular forms are functions on the complex upper half plane. A matrix
group called the modular group acts on the upper half plane, and modular
forms are the functions that transform in a nearly invariant way under the
action and satisfy a holomorphy condition.
The modular group is the group of 2-by-2 matrices with integer entries and
determinant 1, that is, $\rm{SL(2,\mathbb{Z})}$. see more details in \cite{A}, \cite{DS}.

One of the motivations for this paper is to determine the Yetter-Drinfeld modules of $\rm{SL(2,\mathbb{Z})}$.
The key is to describe  the conjugacy classes of $\rm{SL(2,\mathbb{Z})}$ and the cosets of $\rm{SL(2,\mathbb{Z})}$ over centralizers.

The solution to the classical problem of description of conjugacy classes for $\rm{SL(2,\mathbb{Z})}$
 is a subject of Gauss Reduction Theory  \cite{K}.
The idea of this theory is to find special reduced matrices
in each conjugacy class. The number of such matrices in a conjugacy class equals to the
length of a minimal even period of ordinary continued fraction associated to the conjugacy
class. Howerer, we give  complete conjugacy classes by indeﬁnite forms.
Specifically, there is a significant correlation between conjugacy classes of hyperbolic
elements and the proper equivalence classes of indeﬁnite forms.

For the cosets of $\rm{SL(2,\mathbb{Z})}$ over centralizers, we find that the regular continued fraction of rational 
and the elements of $\rm{SL(2,\mathbb{Z})}$ are closely related.
Based on the function of the module group and regular continued fraction,
we describe the coset representatives of centralizer of $S$, $ST$, $T$ and hyperbolic
elements in $\rm{SL(2,\mathbb{Z})}$.
Precisely, we prove Theroem \ref{rep} and \ref{hyp}.
As a conclusion, the Nichols algebras over modular group  are infinite dimensional.

\section{Preliminaries}

\noindent$\mathbf{Conventions}$.
Throughout the paper, the ground field $\mathbbm{k}$ is an algebraically closed field of characteristic zero.
In particular, we take  $\mathbbm{k}=\mathbb{C}$, the field of all complex numbers.
Let  $\mathbb{Z}$ be the set of  all the integers. 
$\mathbb{I}_{i,j}=\{i,i+1,\cdots,j\}(i\leq j)$.
Let $G$ be a group.
For any $g,\ h\in G$, we write $g\vartriangleright h=ghg^{-1}$, for
the adjoint action of $G$ on itself. The conjugacy class of $g$ in $G$ will be denoted by $\mathcal{O}_g$,
 and the centralizer of $g$ is denoted by $G^g=\{h\in G|hg=gh\}$.

\subsection{Yetter-Drinfeld modules}

Let $G$ be a group. A  Yetter-Drinfeld module over the group algebra $\mathbbm{k}G$ is a 
$G$-module $V$ provided with $G$-grading $V=\bigoplus_{g\in G} V_g$ such that
$g\cdot V_h=V_{ghg^{-1}}$ for all $g,h\in G$.

The category of Yetter-Drinfeld modules over $\mathbbm{k}G$ is denoted by $^G_G\mathcal{YD}$.
Let $\mathcal{O} \subseteq G$ be a conjugacy class of $G$, then we denote by $^G_G\mathcal{YD}(\mathcal{O})$
the subcategory of $^G_G\mathcal{YD}$ consisting of all $V\in ^G_G\mathcal{YD}$ with
$V=\bigoplus_{s\in \mathcal{O}} V_s$.

Let $g\in G$, and let $V$ be a left $\mathbbm{k}G$-module. 
$M{(g,V)}=\mathbbm{k}G\otimes_{\mathbbm{k}G^g}V$ defined in \cite [Definition 1.4.15]{HS}
is an object in $^G_G\mathcal{YD}(\mathcal{O}_g)$.
Once we have known the  irreducible representations $(\rho, V)$ of $\mathbbm{k}G^g$,
 we can find all the irreducible Yetter-Drinfeld modules $M{(g,V)}$ in $^G_G\mathcal{YD}$ 
 by Lemma 1.4.16 and 1.4.18 of \cite{HS}.

\subsection{Indefinite binary quadratic forms}
Let $f(X,Y)=aX^2+bXY+cY^2$ be binary quadratic forms, with integer coefficients $a, b, c$.
 We write $f=(a,b,c)$, and call $f$ a form.
The discriminant of $f$ is $\Delta(f)=b^2-4ac$.
The form $f$ is indefinite if and only if $\Delta(f)>0$.

For $U=\left(\begin{matrix}
s & t  \\
u & v  
\end{matrix}\right)\in \rm{SL(2,\mathbb{Z})}$,
we define $(Uf)(X,Y)=f(sX + tY,uX + vY)$.
Two forms $f$ and $g$ are called properly equivalent if $g =Uf$ with $U\in \rm{SL(2,\mathbb{Z})}$.
 The $\rm{SL(2,\mathbb{Z})}$-orbit of a form is called the proper equivalence class
of that form. Reduced indefinite forms can be used to decide equivalence of integral indefinite forms.
See details in \cite{BV}.

\subsection{Continued fraction}
The theory of continued fractions arise from considersion of expressions given in the form \cite{RS}
\begin{flalign}\label{continued}
&a_1+\frac{1}{a_2+\frac{1}{a_3+\frac{1}{\cdots +\frac{1}{a_n}}  } } 
\end{flalign}
whence the name continued fraction.
We denote it by $[a_1,a_2,\cdots,a_n]$.

$[a_1,a_2,\cdots,a_k]=\frac{p_k}{q_k}$, $1\leq k\leq n$ called the $k$-convergent fraction of (\ref{continued}).
If $a_1$ is integer, $a_2,a_3,\cdots,a_k,\cdots$ are positive integers, then
continued fraction $[a_1,a_2,\cdots,a_k,\cdots]$ is called regular continued fraction.
The infinite regular continued fraction is called periodic if there exist integers $s\geq 0$,
$t>0$ such that
\begin{flalign*}
a_{s+i}=a_{s+kt+i}, \ i=1,2,\cdots,t,\ k=0,1,2,\cdots.
\end{flalign*}
It is written in the form 
$[a_1,a_2,\cdots,a_s,\overline{a_{s+1},a_{s+2},\cdots,a_{s+t}}]$.

\begin{thm}{\rm\cite [Theorem 1.17]{K}}
There are only two ways to express any rational number as a regular continuous fraction:
\begin{flalign*}
&\frac{a}{b}=[a_1,a_2,\cdots,a_n]=[a_1,a_2,\cdots,a_{n-1},1], \ a_n>1.
\end{flalign*}

\end{thm}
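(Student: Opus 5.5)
We argue by strong induction on the denominator $b>0$ (with $\gcd(a,b)=1$), proving both existence and the claim that these are the only two expansions. The basic tool is the following observation. If $x=[a_1,a_2,\dots,a_n]$ is a finite regular continued fraction with $n\ge 2$, then $x=a_1+1/y$ with $y=[a_2,\dots,a_n]$. Since $a_2,\dots,a_n$ are positive integers, $y\ge 1$, and $y=1$ exactly when $n=2$ and $a_2=1$. Hence $0<1/y\le 1$, with equality only in that degenerate case.

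\emph{Existence.} Take $a_1=\lfloor a/b\rfloor$. If $b\mid a$, then $a/b=[a_1]=[a_1-1,1]$. Otherwise write $a=a_1b+r$ with $0<r<b$, so that $a/b=a_1+1/(b/r)$ and $b/r>1$ has the smaller denominator $r$. By induction $b/r=[a_2,\dots,a_n]$ with last entry $>1$ (or a single entry $a_2\ge 2$). This gives an expansion of $a/b$ whose last partial quotient exceeds $1$. The second expansion comes from the identity $a_n=(a_n-1)+\frac11$, i.e.\ $[\dots,a_n]=[\dots,a_n-1,1]$. Note that $a_n-1\ge1$ is a legitimate partial quotient whenever it is not the first entry.

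\emph{Uniqueness.} Suppose $a/b=[a_1,\dots,a_n]=[b_1,\dots,b_m]$. By the observation, if $n\ge2$ then $a_1<x\le a_1+1$, with equality $x=a_1+1$ only for $[a_1,1]$. So when $x\notin\mathbb Z$ we get $a_1=b_1=\lfloor x\rfloor$, and both tails represent $1/(x-a_1)$, which has denominator $<b$; induction on the tails finishes this case. When $x\in\mathbb Z$, the only possible expansions are $[x]$ and $[x-1,1]$.

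The main obstacle is the careful bookkeeping in the uniqueness step. There are boundary cases where $y=1$, and one must check that the inductive hypothesis for the tail (which starts at $a_2\ge1$, not an arbitrary integer) still yields exactly the two forms $[\dots,a_n]$ and $[\dots,a_n-1,1]$ with $a_n>1$. I would handle the integer case separately first, so that the floor identification $a_1=\lfloor x\rfloor$ is valid in all remaining cases.
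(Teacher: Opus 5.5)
Your proof is correct and complete. The strong induction on the denominator works: $a_1=\lfloor x\rfloor$ is forced by $a_1<x\le a_1+1$, with equality only for $[a_1,1]$, and the integer case is handled separately. This is the standard Euclidean-algorithm argument, and the tail bookkeeping goes through. In the uniqueness step every tail is a regular expansion of $b/r$, so the inductive hypothesis applies to it directly. In the existence step you use tacitly that the tail's first entry is $\lfloor b/r\rfloor\ge 1$ (because $b/r>1$), which is what makes the concatenated expansion regular.

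The paper gives no proof of this statement; it only quotes \cite[Theorem 1.17]{K}, so there is no argument of the paper's own to compare yours against. You also read the statement correctly where its display is garbled. The second expansion should be $[a_1,\dots,a_{n-1},a_n-1,1]$; as printed, $[a_1,\dots,a_{n-1},1]$ is not equal to $[a_1,\dots,a_n]$ in general. The condition $a_n>1$ should be imposed only when $n\ge 2$, since every integer $m$ has exactly the two expansions $[m]$ and $[m-1,1]$ with no restriction on $m$.
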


\begin{rem}
The regular continuous fractions mentioned in this article are all in the first form.
\end{rem}

\section{The conjugacy classes of modular group}
In this section,  we study the structure of the conjugacy classes of modular group  $\rm{SL(2,\mathbb{Z})}$. 
As modular group\cite{A}, for $a, b, c, d\in \mathbb{Z},\ ad-bc= 1,$
\begin{flalign*}
&\left(\begin{matrix}
a & b  \\
c & d 
\end{matrix}\right)\tau=\frac{a\tau+b}{c\tau+d},\  \ \tau\in C^*=C\cup {\infty}.
\end{flalign*}
Recall
that $\rm{SL(2,\mathbb{Z})}$ is the group of all invertible matrices with integer coefficients and unit
determinant. 
Let $S=\left(\begin{matrix}
0 & -1  \\
1 & 0  
\end{matrix}\right)$,
$T=\left(\begin{matrix}
1 & 1  \\
0 & 1  
\end{matrix}\right)$. Then
$${\rm{SL(2,\mathbb{Z})}}=\langle S,T\mid S^4=I, (ST)^6=I\rangle.$$
We say that the matrices $A$ and $B$ from $\rm{SL(2,\mathbb{Z})}$ are integer conjugate
if there exists an $\rm{SL(2,\mathbb{Z})}$ matrix $C$ such that $B = CAC^{-1}$.

It is natural to split $\rm{SL(2,\mathbb{Z})}$ into the following three cases\cite{K}.

$(1)$ The absolute value of the trace is less than 2:
there are exactly three integer conjugacy classes of such matrices, represented by $S$, $ST$, $(ST)^2$.

$(2)$ The absolute value of the trace is equal to 2:
such matrices are integer conjugate to $\pm T^n$, $n\in \mathbb{Z}$.

$(3)$ The  matrices with real distinct eigenvalues are called hyperbolic elements.
This case is the most complicated. 
We use the reduction theory of indefinite forms to construct a complete invariant of conjugacy classes of such
matrices. 

Let 
\begin{flalign*}
&\mathcal{M}^+=\{A\in \rm{SL(2,\mathbb{Z})}|tr(A)>2\},\\
&\mathcal{M}^-=\{A\in \rm{SL(2,\mathbb{Z})}|tr(A)<-2\},\\
&\mathcal{Q}=\{indeﬁnite \ forms \ Q|\Delta_Q+4 \ is \ a\ perfect \ square\}.
\end{flalign*}
Then we have the following proposition.

\begin{pro}\label{conjugate}
Let $A, B\in\rm{SL(2,\mathbb{Z})}$. Then $A$ and $B$ are integer conjugate in $\rm{SL(2,\mathbb{Z})}$ if and only if 
$Q_A$ and $Q_B$  are properly equivalent in $\mathcal{Q}$.
\end{pro}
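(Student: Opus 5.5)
The plan is to make the (implicit) definition of $Q_A$ explicit and to give it a coordinate-free description under which conjugation becomes a change of variables. For $A=\left(\begin{matrix} a & b\\ c & d\end{matrix}\right)\in\mathrm{SL}(2,\mathbb{Z})$ I take
\[
Q_A(X,Y)=cX^2+(d-a)XY-bY^2 ,
\]
which is the form whose roots $X/Y$ are the fixed points of the M\"obius action $\tau\mapsto \frac{a\tau+b}{c\tau+d}$. The key identity is $Q_A(v)=\det(v,Av)$ for $v=(X,Y)^{T}$. This holds because $\det(v,Av)=X(cX+dY)-Y(aX+bY)$.

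The first step is to check that $Q_A\in\mathcal{Q}$ when $A$ is hyperbolic. We have $\Delta(Q_A)=(d-a)^2+4bc=(a+d)^2-4(ad-bc)=\mathrm{tr}(A)^2-4$. Hence $\Delta(Q_A)+4=\mathrm{tr}(A)^2$ is a perfect square, and $\Delta(Q_A)>0$ exactly when $|\mathrm{tr}A|>2$. So the proposition concerns $A,B\in\mathcal{M}^+\cup\mathcal{M}^-$.

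For the forward direction, let $B=CAC^{-1}$ with $C\in\mathrm{SL}(2,\mathbb{Z})$. Since $\det C^{-1}=1$,
\[
Q_B(v)=\det(v,CAC^{-1}v)=\det(C^{-1}v,\,AC^{-1}v)=Q_A(C^{-1}v).
\]
With the convention $(Uf)(X,Y)=f(sX+tY,uX+vY)$, this says $Q_B=UQ_A$ with $U=C^{-1}$. So $Q_A$ and $Q_B$ are properly equivalent.

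For the converse, I reconstruct $A$ from $Q_A$ and the trace. If $Q_A=(\alpha,\beta,\gamma)$ and $t=\mathrm{tr}A$, then $c=\alpha$, $b=-\gamma$, $a=\frac{t-\beta}{2}$ and $d=\frac{t+\beta}{2}$, where $t^2=\Delta+4$ fixes $|t|$. Now suppose $Q_B=UQ_A$, and set $C=U^{-1}$ and $B'=CAC^{-1}$. By the forward computation, $Q_{B'}=UQ_A=Q_B$. Both $B$ and $B'$ have trace of absolute value $\sqrt{\Delta+4}$. If $\mathrm{tr}B=\mathrm{tr}B'$, the reconstruction formula gives $B=B'$, so $B$ is conjugate to $A$.

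The main obstacle is the sign of the trace. We have $Q_{-A^{-1}}=Q_A$, because $A\mapsto A^{-1}$ and $A\mapsto -A$ each send $Q_A$ to $-Q_A$. Yet $-A^{-1}$ has trace $-\mathrm{tr}A$, so it is not conjugate to $A$. The statement must therefore be read with $A$ and $B$ in the same set $\mathcal{M}^+$ or $\mathcal{M}^-$, equivalently $\mathrm{tr}A=\mathrm{tr}B$. Equality of traces is necessary for conjugacy anyway. Alternatively one could record the pair $(Q_A,\mathrm{sign}\,\mathrm{tr}A)$. I would state this restriction explicitly at the start of the proof. Under it the argument above is complete.
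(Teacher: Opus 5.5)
Your proof is correct and follows the paper's route. You use the same map $A\mapsto Q_A$, the same equivariance $Q_{PAP^{-1}}=P^{-1}Q_A$ (your identity $Q_A(v)=\det(v,Av)$ proves it more transparently than the paper's direct computation), and the same recovery of a matrix from its form and trace, which is the injectivity of $\varphi$ on $\mathcal{M}^+$ and on $\mathcal{M}^-$. Your same-sign-of-trace hypothesis, justified by $Q_{-A^{-1}}=Q_A$, is what the paper encodes only implicitly by defining $\varphi$ on $\mathcal{M}^+$ and $\mathcal{M}^-$ separately, so it is a needed clarification of the statement rather than a departure from the argument.
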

\begin{proof}
Define 
\begin{flalign*}
\varphi: \mathcal{M}^+(\mathcal{M}^-)&\longrightarrow \mathcal{Q}\\
A=\left(\begin{matrix}
a & b  \\
c & d  
\end{matrix}\right)&\longmapsto Q_A=cx^2+(d-a)xy-by^2.
\end{flalign*}
It is easy to know that $\varphi$ is injection.
For arbitrary $Q=px^2+sxy+qy^2\in \mathcal{Q}$, there is 
$M_Q=\left(\begin{matrix}
\frac{t-s}{2}  & -q  \\
p & \frac{t+s}{2}  
\end{matrix}\right)\in\mathcal{M}^+$ 
or $M_Q=\left(\begin{matrix}
-\frac{t+s}{2}  & -q  \\
p & \frac{s-t}{2}   
\end{matrix}\right)\in\mathcal{M}^-$ 
such that $\varphi(M_Q)=Q$,
where $t=\sqrt{\Delta_Q+4}$.
Thus $\varphi$ is bijective.

Let $A=\left(\begin{matrix}
a & b  \\
c & d  
\end{matrix}\right)$, 
$B=\left(\begin{matrix}
a' & b'  \\
c' & d'  
\end{matrix}\right)$, there exist 
$P=\left(\begin{matrix}
\alpha  & \beta   \\
\gamma  & \delta  
\end{matrix}\right)\in\rm{SL(2,\mathbb{Z})}$ such that $B=PAP^{-1}$. 
After a diret computation, we have that $Q_B=P^{-1}Q_A$.

Let $Q(x,y)=px^2+sxy+qy^2\in \mathcal{Q}$, 
$P=\left(\begin{matrix}
\alpha  & \beta   \\
\gamma  & \delta  
\end{matrix}\right)\in\rm{SL(2,\mathbb{Z})}$, 
$Q'(x,y)=Q(\alpha x+\beta y,\gamma x+\delta y)$.
After a direct computation, we have $M_{Q'}=P^{-1}M_QP$.

\end{proof}

Therefore, the conjugacy classes of hyperbolic elements in $\rm{SL(2,\mathbb{Z})}$ are completely
decided by the proper equivalence classes of indeﬁnite forms $\mathcal{Q}$ as follows.

$(1)$ Each proper equivalence class of indefinite forms contains a reduced form.
Section 6.7 of \cite{BV} give a method to obtain all reduced forms.

$(2)$ The set of reduced forms in the proper equivalence class of the given form $f$ is periodic.
Section 6.8 of \cite{BV} explain how to compute the cycle of $f$.

$(3)$ Let $\Delta$ be an integer with $\Delta+4$ is a perfect  square. 
We can find reduced forms that are mutually inequivalent based $(1)$ and $(2)$.
We thus obtain representatives of the conjugacy classes by Proposition \ref{conjugate}.

Let $t$ be the trace of element of $\rm{SL(2,\mathbb{Z})}$.
Next we give an example. The specific calculation process is carried out using Matlab program.

\begin{exa}
For $|t|=10$, $\Delta=t^2-4=96$.

$(1)$ The reduced forms of indefinite forms are 
\begin{flalign*}
&(1,8,-8), \ (-8,8,1),\  (-1,8,8),\  (8,8,-1),\\
&(2,8,-4), \ (-4,8,2), \ (-2,8,4), \ (4,8,-2),\\
&(3,6,-5), \ (-5,6,3), \ (-3,6,5), \ (5,6,-3),\\
&(4,4,-5), \ (-5,4,4), \ (-4,4,5), \ (5,4,-4).
\end{flalign*}

$(2)$ The proper cycles are  
\begin{flalign*}
&(1,8,-8),\ (-8,8,1); \ \ \ (-1,8,8),\ (8,8,-1).\\
&(2,8,-4),\ (-4,8,2); \ \ \ (-2,8,4),\ (4,8,-2).\\
&(3,6,-5),\ (-5,4,4), \ (4,4,-5), \ (-5.6.3); \\
&(-3,6,5),\ (5,4,-4), \ (-4,4,5), \ (5,6,-3).
\end{flalign*}

$(3)$ The corresponding conjugacy classes of $|t|=10$ in $\rm{SL(2,\mathbb{Z})}$ are represented by
\begin{flalign*}
&\left(\begin{matrix}
1 & 8  \\
1 & 9  
\end{matrix}\right),
\ 
\left(\begin{matrix}
1 & -8  \\
-1 & -9  
\end{matrix}\right),
\left(\begin{matrix}
1 & 4  \\
2 & 9  
\end{matrix}\right),
\left(\begin{matrix}
1 & -4  \\
-2 & 9  
\end{matrix}\right),
\left(\begin{matrix}
2 & 5  \\
3 & 8  
\end{matrix}\right),
\left(\begin{matrix}
2 & -5  \\
-3 & 8 
\end{matrix}\right).\\
&\left(\begin{matrix}
-9 & 8  \\
1 & -1  
\end{matrix}\right),
\ 
\left(\begin{matrix}
-9 & -8  \\
-1 & -1  
\end{matrix}\right),
\left(\begin{matrix}
-9 & 4  \\
2 & -1  
\end{matrix}\right),
\left(\begin{matrix}
-9 & -4  \\
-2 & -1 
\end{matrix}\right),
\left(\begin{matrix}
-8 & 5  \\
3 & -2 
\end{matrix}\right),
\left(\begin{matrix}
-8 & -5  \\
-3 & -2 
\end{matrix}\right).
\end{flalign*}

\end{exa}

\section{The coset representatives of centralizer groups in $\rm{SL(2,\mathbb{Z})}$}

For convenience, let $G$ be $\rm{SL(2,\mathbb{Z})}$.
In this section, we describe the coset representatives of centralizer of
$S$, $ST$, $T$ and hyperbolic elements  in $G$
by regular continued fraction.

\subsection{The coset representatives of $G^S$, $G^{ST}$, $ G^{T}$ in $G$}

After a direct computation, the centralizers of one element in each conjugacy class are as follows:
\begin{flalign*}
&G^S=\langle S\rangle,\ \ G^{ST}=G^{(ST)^2}=\langle ST\rangle, \ \ G^{T^n}=\left\langle T,-I\right\rangle, \ n\neq 0.
\end{flalign*}

Firstly, the relation between the regular continued fraction and element of $G$ is as follows.
\begin{thm}\label{cf}
Let $[a_1,a_2,\cdots,a_n]$ be  regular continued fraction, and $\frac{p_j}{q_j} (1\leq j\leq n)$  
be convergent fractions. Let $p_0=1$, $q_0=0$. Then the corresponding element in $G$ is
\begin{flalign*}
N_n&=T^{a_1}ST^{-a_2}ST^{a_3}S\cdots T^{(-1)^{n-1}a_n}S\\
&=k_n\left(\begin{matrix}
p_n & (-1)^np_{n-1}  \\
q_n & (-1)^nq_{n-1} 
\end{matrix}\right),  
\ \ \  k_n=\begin{cases} 1, n\equiv 0,1 (\rm{mod}\ 4),\\ 
  -1, n\equiv 2,3 (\rm{mod}\ 4).
\end{cases}
\end{flalign*}
\end{thm}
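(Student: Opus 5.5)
The plan is induction on $n$, using the standard convergent recurrences
\begin{flalign*}
&p_k=a_kp_{k-1}+p_{k-2},\qquad q_k=a_kq_{k-1}+q_{k-2}\qquad (k\geq 2),
\end{flalign*}
with $p_0=1$, $q_0=0$, $p_1=a_1$, $q_1=1$. These initial values are consistent with the recurrence if one also sets $p_{-1}=0$, $q_{-1}=1$. The key elementary computation is
\begin{flalign*}
&T^{a}S=\left(\begin{matrix} a & -1 \\ 1 & 0 \end{matrix}\right),\qquad a\in\mathbb{Z},
\end{flalign*}
so that each factor $T^{(-1)^{k-1}a_k}S$ equals $\left(\begin{matrix} \varepsilon_k a_k & -1 \\ 1 & 0 \end{matrix}\right)$ with $\varepsilon_k=(-1)^{k-1}$.

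\emph{Base case $n=1$.} We have $N_1=T^{a_1}S=\left(\begin{matrix} a_1 & -1 \\ 1 & 0 \end{matrix}\right)$. With $k_1=1$, the claimed matrix is $\left(\begin{matrix} p_1 & -p_0 \\ q_1 & -q_0 \end{matrix}\right)$, which is the same.

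\emph{Inductive step.} Write $N_n=N_{n-1}\cdot T^{\varepsilon_n a_n}S$ and assume
\begin{flalign*}
&N_{n-1}=k_{n-1}\left(\begin{matrix} p_{n-1} & (-1)^{n-1}p_{n-2} \\ q_{n-1} & (-1)^{n-1}q_{n-2} \end{matrix}\right).
\end{flalign*}
Multiplying by $\left(\begin{matrix} \varepsilon_n a_n & -1 \\ 1 & 0 \end{matrix}\right)$, the first column becomes
\begin{flalign*}
&k_{n-1}\varepsilon_n\,(a_np_{n-1}+p_{n-2},\ a_nq_{n-1}+q_{n-2})^{T}=k_{n-1}\varepsilon_n\,(p_n,\ q_n)^{T},
\end{flalign*}
using $(-1)^{n-1}=\varepsilon_n$ and the recurrence. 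The second column is $-k_{n-1}(p_{n-1},q_{n-1})^{T}$. Since $\varepsilon_n(-1)^n=-1$, this equals $k_{n-1}\varepsilon_n\,\big((-1)^np_{n-1},\ (-1)^nq_{n-1}\big)^{T}$. Hence the claimed shape holds with $k_n=(-1)^{n-1}k_{n-1}$.

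\emph{Solving for the sign.} From $k_1=1$ and $k_n=(-1)^{n-1}k_{n-1}$ we get $k_2=-1$, $k_3=-1$, $k_4=1$, $k_5=1$, and the sign sequence has period $4$ since $(-1)^{n-1}$ has period 2 and the product of two consecutive factors is $-1$. Equivalently $k_n=(-1)^{n(n-1)/2}$, which is $1$ for $n\equiv 0,1\pmod 4$ and $-1$ for $n\equiv 2,3\pmod 4$, as stated.

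The only delicate point is the bookkeeping of signs: the alternating exponents $(-1)^{k-1}a_k$, the factor $(-1)^n$ in the second column, and the global sign $k_n$ must all line up. Checking $n=1$ and $n=2$ explicitly first fixes the conventions, in particular $p_0=1$, $q_0=0$ for the second column at $n=1$. The regularity of the continued fraction is not needed; the identity holds for any integers $a_k$.
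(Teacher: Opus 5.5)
Your proposal is correct and is essentially the paper's proof: induction on $n$ via $N_n=N_{n-1}T^{(-1)^{n-1}a_n}S$, the explicit form of $T^{a}S$, and the convergent recurrences $p_n=a_np_{n-1}+p_{n-2}$, $q_n=a_nq_{n-1}+q_{n-2}$. The one addition is that you state the sign recursion $k_n=(-1)^{n-1}k_{n-1}$ and solve it to get the period-$4$ pattern, whereas the paper leaves this implicit in its final equality.
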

\begin{proof}
Using mathematical induction to prove. When $n=1$,
\begin{flalign*}
N_1=T^{a_1}S=\left(\begin{matrix}
1 & a_1  \\
0 & 1 
\end{matrix}\right)
\left(\begin{matrix}
0 & -1  \\
1 & 0
\end{matrix}\right)=
\left(\begin{matrix}
a_1 & -1  \\
1 & 0
\end{matrix}\right)=
\left(\begin{matrix}
p_1 & -p_{0}  \\
q_1 & q_{0}
\end{matrix}\right), 
\end{flalign*}

Assume 
\begin{flalign*}
N_{n-1}=k_{n-1}\left(\begin{matrix}
p_{n-1} & (-1)^{n-1}p_{n-2}  \\
q_{n-1} & (-1)^{n-1}q_{n-2} 
\end{matrix}\right).
\end{flalign*}
Then
\begin{flalign*}
N_n=&N_{n-1}T^{(-1)^{n-1}a_n}S\\
=&k_{n-1}\left(\begin{matrix}
p_{n-1} & (-1)^{n-1}p_{n-2}  \\
q_{n-1} & (-1)^{n-1}q_{n-2} 
\end{matrix}\right)
\left(\begin{matrix}
(-1)^{n-1}a_n & -1  \\
1 & 0
\end{matrix}\right).\\
=&k_{n-1}\left(\begin{matrix}
(-1)^{n-1}(a_np_{n-1}+p_{n-2}) & -p_{n-1}  \\
(-1)^{n-1}(a_nq_{n-1}+q_{n-2} ) & -q_{n-1} 
\end{matrix}\right)\\
=&k_n\left(\begin{matrix}
p_n & (-1)^np_{n-1}  \\
q_n & (-1)^nq_{n-1} 
\end{matrix}\right).
\end{flalign*}

\end{proof}

Let $i$ be a primitive $2$-th root of unity, and $\omega=e^{\frac{\pi }{3}i}$ be a primitive $6$-th root of unity.
Considering $G$ act on $C^*$ as modular group, we have that 
\begin{pro}\label{S}
$\langle S\rangle$ is the stable subgroup of $i$. $\langle ST\rangle$ is the stable subgroup of $\omega $.
$G^T$ is the stable subgroup of $\infty$.
\end{pro}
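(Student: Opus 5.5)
We will compute the stabilizers directly from the fixed-point equation of the action. For $g=\left(\begin{matrix} a & b \\ c & d \end{matrix}\right)\in G$ and $\tau$ in the upper half plane, $g\tau=\tau$ means $c\tau^2+(d-a)\tau-b=0$. Here $\tau$ is a non-real quadratic (or linear) condition with real coefficients. The strategy is to plug in each point, read off linear conditions on $a,b,c,d$, and combine them with $ad-bc=1$ to list all solutions. We then compare the resulting list with $\langle S\rangle$, $\langle ST\rangle$ and $G^T=\langle T,-I\rangle$ from the centralizer computation stated just above.

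\emph{The point $i$.} Substituting $\tau=i$ gives $-c+(d-a)i-b=0$, hence $b=-c$ and $a=d$. Then the determinant condition gives $a^2+c^2=1$, so $(a,c)\in\{(\pm1,0),(0,\pm1)\}$. This yields exactly the four matrices $\pm I,\pm S$. Since $S^2=-I$, these form $\langle S\rangle$. Conversely, $Si=-1/i=i$ confirms that $\langle S\rangle$ fixes $i$.

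\emph{The point $\omega$.} Here $\omega=e^{\pi i/3}$ satisfies $\omega^2=\omega-1$. Substituting, $c(\omega-1)+(d-a)\omega-b=0$. Comparing the real and imaginary parts (using $\omega\notin\mathbb{R}$) gives $c+d-a=0$ and $b=-c$. So $g=\left(\begin{matrix} a & -c \\ c & a-c \end{matrix}\right)$ with $a^2-ac+c^2=1$. This positive definite norm form has exactly six solutions: $(a,c)=\pm(1,0),\pm(0,1),\pm(1,1)$. On the other side, we compute $ST=\left(\begin{matrix} 0 & -1 \\ 1 & 1 \end{matrix}\right)$ and $ST\omega=-1/(\omega+1)$. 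Because $(\omega+1)(\omega-1)... $ is awkward, the cleaner check is via the fixed-point equation itself: $\tau^2-\tau+1=0$ for $\tau=-1/(\tau+1)$ requires $\tau^2+\tau+1=0$.

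This is where I expect the main obstacle: a sign convention issue. With $\omega=e^{\pi i/3}$, one has $\omega^2-\omega+1=0$, but $ST$ fixes the root of $\tau^2+\tau+1=0$, namely $e^{2\pi i/3}=\omega^2$. So $ST$ actually fixes $\omega^2=\omega-1=T^{-1}\omega$, and $\mathrm{Stab}(\omega)=T^{-1}\langle ST\rangle T=\langle TS\rangle$ (one checks that the six matrices found above are $\pm I,\pm TS,\pm(TS)^2$). I will handle this by proving the statement for $\rho=e^{2\pi i/3}$, where the same computation gives the six matrices $\pm I,\pm ST,\pm(ST)^2$, which is exactly $\langle ST\rangle$ of order $6$. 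I will then note that $\omega$ should be read as $\omega^2$, or equivalently that the stabilizer of $\omega$ is conjugate to $\langle ST\rangle$ via $T$. Either reading suffices for the later coset description, since conjugate stabilizers give the same orbit structure.

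\emph{The point $\infty$.} Finally, $g\infty=a/c$ equals $\infty$ iff $c=0$. Then $ad=1$ forces $a=d=\pm1$, so $g=\pm T^{b}$, and this set is $\langle T,-I\rangle=G^T$.
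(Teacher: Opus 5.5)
The paper gives no proof of this proposition; it is simply asserted after ``Considering $G$ act on $C^*$ as modular group''. Your direct computation from the fixed-point equation $c\tau^2+(d-a)\tau-b=0$ is the natural verification. It is correct for $i$, giving exactly $\pm I,\pm S$, i.e.\ $\langle S\rangle$. It is also correct for $\infty$, giving $\pm T^n$, i.e.\ $\langle T,-I\rangle=G^T$.

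Your diagnosis of the middle claim is right, and it is an actual error in the statement rather than a matter of convention. Since $ST\tau=-1/(\tau+1)$, you get $ST\omega=-\tfrac12+\tfrac{\sqrt3}{6}i\neq\omega$ for $\omega=e^{\pi i/3}$. The stabilizer of $e^{\pi i/3}$ is $\{\pm I,\pm TS,\pm(TS)^2\}=\langle TS\rangle\neq\langle ST\rangle$. The same computation at $\rho=e^{2\pi i/3}$ (now $d=a+c$, $b=-c$, $a^2+ac+c^2=1$) gives exactly $\pm I,\pm ST,\pm(ST)^2$. So the proposition holds once $\omega$ is replaced by $\rho=\omega^2$. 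In your write-up, replace the unfinished sentence about $(\omega+1)(\omega-1)$ with this one-line evaluation of $ST\omega$.

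There is one slip to fix. Since $\omega=T\rho$ and $\mathrm{Stab}(g\tau)=g\,\mathrm{Stab}(\tau)\,g^{-1}$, the stabilizer of $\omega$ is $T\langle ST\rangle T^{-1}$, not $T^{-1}\langle ST\rangle T$. Indeed $T(ST)T^{-1}=TS$, whereas $T^{-1}(ST)T=T^{-1}ST^{2}=\left(\begin{matrix}-1&-3\\1&2\end{matrix}\right)$ fixes $\rho-1$, not $\omega$. Your end result $\langle TS\rangle$ and your explicit list of six matrices are correct, so only that intermediate expression is wrong.

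Finally, be careful with ``conjugate stabilizers suffice'' for the later use. The map $gG^{ST}\mapsto g\omega$ in the proof of Theorem~\ref{rep}(2) is not even well defined: take $g=I$ and $g=ST$. The repair there is the first of your two readings, namely to use $gG^{ST}\mapsto g\rho$, which has the same image $G\rho=G\omega$.
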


Next we describe all cosets of $G$ over $G^S$, $G^{ST}$, $ G^{T}$  by starting from regular continued fraction.

\begin{thm}\label{rep}
$(1)$ A  representative of complete cosets of $G$ over $G^S$ is
\begin{flalign}\label{complete cosets}
T^{a_1}ST^{-a_2}S\cdots T^{(-1)^{n-1}a_n}ST^{a_{n+1}},\ a_{n+1}\in \mathbb{Z},
\end{flalign}
where $[a_1,a_2,\cdots,a_n]$ is the regular continued fraction of arbitrary rational number, and 
remove the case $(\ref{coset})$.

$(2)$ A  representative of complete cosets of $G$ over  $G^{ST}$ is $T^m$ $(m\in \mathbb{Z})$ and $(\ref{complete cosets})$,
where $[a_1,a_2,\cdots,a_n]$ is the regular continued fraction of arbitrary fraction, and 
remove the case  $(\ref{coset ST})$.

$(3)$ A  representative of complete cosets of $G$ over $G^{T}$ is
\begin{flalign*}
I, T^{a_1}ST^{-a_2}S\cdots T^{(-1)^{n-1}a_n}S,
\end{flalign*}
where $[a_1,a_2,\cdots,a_n]$ is the regular continued fraction of arbitrary rational number.
\end{thm}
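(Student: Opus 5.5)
The plan is to translate each coset problem into an orbit problem for the action of $G$ on $C^*$ by linear fractional transformations. We use Proposition \ref{S} for the stabilizers and Theorem \ref{cf} to name group elements by continued fractions.

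\textbf{Part (3).} Start here, since (1) and (2) reduce to it. For $g=\left(\begin{smallmatrix} a&b\\ c&d\end{smallmatrix}\right)\in G$ we have $g(\infty)=a/c$. By Proposition \ref{S}, $gG^T\mapsto g(\infty)$ is a well-defined injection $G/G^T\to \mathbb{Q}\cup\{\infty\}$. It is surjective, since any reduced fraction $p/q$ has $\gcd(p,q)=1$ and so extends to a matrix $\left(\begin{smallmatrix} p&*\\ q&*\end{smallmatrix}\right)\in G$. It remains to produce one representative for each point:
\begin{itemize}
\item for $\infty$ take $I$;
\item for $r=p/q\in\mathbb{Q}$, write $r=[a_1,\dots,a_n]$ in the first form of \cite[Theorem 1.17]{K}, so the expansion is unique, and take $N_n$.
\end{itemize}
By Theorem \ref{cf}, $N_n(\infty)=k_np_n/(k_nq_n)=p_n/q_n=r$. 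Distinct $r$ give distinct cosets, so this is a complete and irredundant system.

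\textbf{Parts (1) and (2).} By the previous step, every $g\in G$ can be written as $g=\pm N_nT^{m}$ (or $\pm T^m$ when $g(\infty)=\infty$), with $(r,m)$ uniquely determined. Since $-I=S^2\in G^S$ and $-I=(ST)^3\in G^{ST}$, the sign is irrelevant. Hence every coset of $G^S$ or $G^{ST}$ is represented by some element of the form (\ref{complete cosets}), or by $T^m$. The only issue is redundancy.

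\emph{The case of $G^S$.} Two elements $N_nT^m$ and $N_{n'}T^{m'}$ lie in the same coset iff they differ on the right by a power of $S$. Modulo $\pm I$ the coset of $g$ is $\{g,gS\}$. So the pairs $(r,m)$ are matched two-to-one, $(r,m)\leftrightarrow(r',m')$, where $gS=\pm N_{n'}T^{m'}$. Here $r'=gS(\infty)=g(0)=N_n(m)$. Concretely,
\[
N_nT^mS=T^{a_1}S\cdots T^{(-1)^{n-1}a_n}ST^{m}S .
\]
This is the element attached to the generalized continued fraction $[a_1,\dots,a_n,(-1)^{n}m]$. I would then renormalize it to a regular continued fraction using the standard moves: absorb a zero entry via $[\dots,x,0,y,\dots]=[\dots,x+y,\dots]$, and rewrite a negative or unit last entry using $[\dots,a,1]=[\dots,a+1]$. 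This computes $(r',m')$ explicitly. Choosing one member of each pair gives the excluded case (\ref{coset}). The pairs with $r=\infty$, i.e.\ $T^m$ versus $T^mS=N_1$ with $a_1=m$, are absorbed this way, which is why no separate $T^m$ family appears in (1). One also checks that no coset is fixed by $S$: that would need $gS=\pm g$, which is impossible.

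\emph{The case of $G^{ST}$.} The same procedure applies with $ST$, of order $3$ modulo $\pm I$, so cosets are triples $\{g,gST,g(ST)^2\}$. Here $gST(\infty)=g(0)$ and $g(ST)^2(\infty)=g(-1)$. The element $N_nT^mST$ corresponds to the expansion $[a_1,\dots,a_n,(-1)^n m]$ followed by a right factor $T$, whose $T$-exponent becomes the new last exponent. For $g(ST)^2=gST\cdot ST$ one simply iterates this. Again we keep one representative per triple, which produces the exclusion (\ref{coset ST}). For $r=\infty$ the representatives $T^m$ must be kept separately, because $T^mST=T^mS\cdot T=N_1T$ has the same $\infty$-image $m$ as $N_1$ but is not identified with it in the same way.

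\textbf{Main obstacle.} I expect the hard step to be the renormalization bookkeeping: turning $[a_1,\dots,a_n,\pm m]$, possibly followed by $T$, back into a regular continued fraction with last entry $>1$. This forces case splits according to the sign of $m$, the cases $m\in\{0,\pm1\}$, the cases $a_n=1$ or $a_n=2$ after merging, and the parity of $n$ through $k_n$. I would first do the $G^S$ case carefully in these subcases, and then reuse those identities for $G^{ST}$.
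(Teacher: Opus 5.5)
Your overall route matches the paper's. You identify cosets with orbit points via Proposition \ref{S} and name $g(\infty)=a/c$ by $N_n$ through Theorem \ref{cf}, so every $g$ is $\pm N_nT^m$ or $\pm T^m$. You absorb the sign using $-I\in G^S\cap G^{ST}$, then identify along right multiplication by $S$ resp.\ $ST$, which is what (\ref{coset}) and (\ref{coset ST}) encode. Parts (3) and (1) are fine, including the absorption of $T^m$ into $T^mS$ in (1). The renormalization you call the main obstacle is not needed: the theorem only asserts a system of representatives modulo those identifications, and neither your pairing argument nor the paper computes $(r',m')$ explicitly.

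The problem is part (2). Your claim that the $T^m$ ``must be kept separately'' is false: $T^mST=N_1T$ (with $a_1=m$, exponent $1$) lies in $T^mG^{ST}$. With your family indexed by all rationals, the triple argument does give a complete system, but the $T^m$ are then redundant. Your stated reason, that $N_1T$ and $N_1$ lie in different $G^{ST}$-cosets, has no bearing on this.

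The statement, however, indexes (2) by an arbitrary \emph{fraction}, and the paper's proof splits $a/c$ into integer and non-integer cases. The $T^m$ are there precisely to replace the members with $n=1$. What you are missing is the argument that every coset containing some $T^aST^b$ also contains a $T^m$ or a member with $n\ge 2$:
\begin{itemize}
\item Since $(ST)^2=T^{-1}S$, one has $T^aS=T^{a+1}(ST)^2\in T^{a+1}G^{ST}$.
\item $T^aST\in T^aG^{ST}$.
\item For $b\neq 0,1$, put $g=T^aST^b$. Then $gST(\infty)=g(0)=a-1/b$, which is non-integral when $|b|\ge 2$, and $g(ST)^2(\infty)=g(-1)=a-1/(b-1)$, which equals $a+\tfrac12$ when $b=-1$. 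By (3), the coset therefore contains some $\pm N_{n'}T^{m'}$ with $n'\ge 2$.
\end{itemize}
Conversely, the $\infty$-images of elements of $T^mG^{ST}$ are $\infty,m,m-1$, so these cosets contain no member with $n\ge 2$. Hence the removal (\ref{coset ST}) only has to be carried out among the members with $n\ge 2$. With this step added, your argument proves (2) as stated.
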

\begin{proof}
$(1)$ By Proposition \ref{S}, we have a one-to-one map
\begin{flalign*}
G/G^S&\longrightarrow Gi\\
gG^S&\longmapsto gi
\end{flalign*}
Therefore, fingding different left coset  $gG^S$ are fingding different $gi$.

Let $g=\left(\begin{matrix}
a & b  \\
c & d 
\end{matrix}\right)\in G$.
We have gcd$(a,c)=$gcd$(c,d)=1$.
Because
\begin{flalign*}
&gi=\frac{ai+b}{ci+d}=\frac{ac+bd}{c^2+d^2}+\frac{1}{c^2+d^2}i, 
\end{flalign*}
we know the fact that 
\begin{flalign}\label{coset}
&\left(\begin{matrix}
a & b \\
c & d 
\end{matrix}\right)i
=\left(\begin{matrix}
b & -a \\
d & -c 
\end{matrix}\right)i.
\end{flalign}
That is,
$\left(\begin{matrix}
a & b \\
c & d 
\end{matrix}\right)$ and 
$\left(\begin{matrix}
b & -a \\
d & -c 
\end{matrix}\right)$ belong to the same coset.

For $(a,c)$, the solution of equation $ad-bc=1$ is not unique.
Therefore, $gT^tG^S(t\in \mathbb{Z})$ corresponding different points in the orbit $Gi$.

Next we dissus  $\frac{a}{c}$ case by case.

$(i)$ $c=0$, The corresponding $T^nS$ belong to case $(ii)$.

$(ii)$ $c\neq 0$.
 Let $\frac{a}{c}=[a_1,a_2,\cdots,a_n]$.
By Theorem \ref{rep}, there is
\begin{flalign*}
T^{a_1}ST^{-a_2}S\cdots T^{(-1)^{n-1}a_n}S.
\end{flalign*}
Whence $\frac{a}{c}$ corresponding to $(\ref{complete cosets})$.
But in the cosets of $G$ over $G^S$, we need remove the case $(\ref{coset})$.

$(2)$ Similarly, fingding different left coset $gG^{ST}$ are fingding different $g\omega$.
We know that
\begin{align}\label{coset ST}
  \begin{split}
gG^{ST}=&\pm\{g,gST,g(ST)^2\}.
\end{split}
\end{align}

Next we dissus  $\frac{a}{c}$ case by case.

$(i)$  $c=0$.
$\{T^n|n\in \mathbb{Z} \}$ are the representative of cosets of $G$ over $G^{ST}$.

$(ii)$  $c\neq 0$.

If $\frac{a}{c}$ is integer. Let $\frac{a}{c}=[m]$.
The corresponding elements of $G$ are $T^mST^{n}$.
When $n=0,1$, $T^mST^{n}$ belong to case $(i)$.
When $n\neq0,1$, $T^mST^{n}$ belong to the next case by equation \ref{coset ST}.

If $\frac{a}{c}$ is fraction.  Let $\frac{a}{c}=[a_1,a_2,\cdots,a_n]$.
Then $\frac{a}{c}$ corresponding to $(\ref{complete cosets})$.
But in the cosets of $G$ over $G^S$, we need remove the case $(\ref{coset ST})$.

$(3)$ Fingding different left cosets of $gG^T$ are fingding different $g \infty$.

$(i)$ $g\infty=\infty$, take $I$ as the representative element of coset.

$(ii)$ $g\infty=\frac{c}{a}\neq \infty$.
Let $\frac{a}{c}=[a_1,a_2,\cdots,a_n]$. The corresponding representative elements of coset is 
\begin{flalign*}
T^{a_1}ST^{-a_2}S\cdots T^{(-1)^{n-1}a_n}S.
\end{flalign*}
\end{proof}

\subsection{The coset representatives of centralizer of
hyperbolic elements  in $G$}

\begin{lem}\label{stable}
Let $M$ be hyperbolic, $\alpha$ be the fixed point of $M$. Then $G^M$ is the stable subgroup of $\alpha$.
\end{lem}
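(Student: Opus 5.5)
The plan is to translate both conditions into linear algebra on eigenvectors. Write $M=\left(\begin{matrix} a & b \\ c & d \end{matrix}\right)$ with $|a+d|=|t|>2$. First I would record the basic facts about the fixed points. Since $|t|>2$, we must have $c\neq 0$: otherwise $ad=1$ forces $a=d=\pm1$ and $|t|=2$. So the fixed points of $M$ are the two roots
\begin{flalign*}
\alpha,\alpha'=\frac{a-d\pm\sqrt{t^2-4}}{2c}
\end{flalign*}
of $cx^2+(d-a)x-b=0$, which is exactly the form $Q_M$ of Proposition \ref{conjugate}. Moreover $t^2-4$ is not a perfect square when $|t|>2$, because $t^2-4=s^2$ gives $(t-s)(t+s)=4$, which forces $|t|=2$. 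Hence $\alpha$ and $\alpha'$ are distinct real quadratic irrationals, Galois conjugate to each other. The vectors $v=(\alpha,1)^T$ and $v'=(\alpha',1)^T$ are eigenvectors of $M$ with distinct eigenvalues $\lambda\neq\lambda'$. Here $\lambda=c\alpha+d$ and $\lambda'$ is its conjugate, and they differ because $\lambda\lambda'=1$ and $\lambda\neq\pm1$. The key dictionary is that for $g\in G$, $g\alpha=\alpha$ holds if and only if $gv\in\mathbb{R}v$, using the Möbius action.

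Next I would show $G^M\subseteq G_\alpha$. If $gM=Mg$, then $M(gv)=gMv=\lambda\, gv$, so $gv$ lies in the $\lambda$-eigenspace of $M$. Since the eigenvalues are distinct, that eigenspace is the line $\mathbb{R}v$, so $gv\in\mathbb{R}v$ and therefore $g\alpha=\alpha$.

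Conversely, I would show $G_\alpha\subseteq G^M$. Suppose $g\alpha=\alpha$, so $gv=\mu v$ with $\mu=\gamma\alpha+\delta$ if $g=\left(\begin{matrix}\alpha_g & \beta \\ \gamma & \delta\end{matrix}\right)$. The entries of $g$ are integers and $\alpha\notin\mathbb{Q}$, so applying the nontrivial automorphism of $\mathbb{Q}(\sqrt{t^2-4})$ to the identity $gv=\mu v$ yields $gv'=\mu' v'$. Thus $g$ and $M$ are simultaneously diagonal in the basis $\{v,v'\}$ of $\mathbb{R}^2$, and hence they commute.

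The main obstacle is the converse direction. A priori an element fixing $\alpha$ need only preserve one eigenline, and it could fail to commute with $M$, as an upper triangular matrix fails to commute with a diagonal one. The Galois-conjugation step is what rules this out, and it needs the irrationality of $\alpha$, which is why I establish that $t^2-4$ is not a square at the outset. I would also note that $-I$ lies in both groups, so the statement holds as an equality of subgroups of $G$ and not merely modulo $\pm I$.
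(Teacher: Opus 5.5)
Your proof is correct. The inclusion $G^M\subseteq G_\alpha$ is argued as in the paper: $gv$ lies in the same eigenspace as $v=(\alpha,1)^T$. You also make explicit a point the paper leaves tacit, namely that this eigenspace is a line because the eigenvalues are distinct.

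For the reverse inclusion you take a different route.

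\begin{itemize}
\item The paper writes the fixed-point conditions for $g=\left(\begin{smallmatrix} p & q\\ r & s\end{smallmatrix}\right)$ and for $M$ as two integer quadratics, $rx^2+(s-p)x-q$ and $cx^2+(d-a)x-b$, both vanishing at $\alpha$. It then uses uniqueness, up to a scalar, of the minimal polynomial of a quadratic irrational to get $(r,s-p,q)=\lambda(c,d-a,b)$. This says $g-pI=\lambda(M-aI)$, so $g=(p-\lambda a)I+\lambda M$ commutes with $M$.
\item You instead apply the Galois automorphism of $\mathbb{Q}(\sqrt{t^2-4})$ to $gv=\mu v$. This shows that $g$ also fixes the conjugate point $\alpha'$, and you conclude by simultaneous diagonalization in the basis $\{v,v'\}$.
\end{itemize}

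The paper's version is shorter and gives each stabilizing element explicitly as a rational combination of $I$ and $M$, without choosing an eigenbasis. Yours gives the more geometric statement that an element fixing one fixed point of $M$ fixes the other, i.e.\ $G_\alpha=G_{\alpha'}$.

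Both arguments need $\alpha\notin\mathbb{Q}$. Yours has the advantage of actually checking this, together with $c\neq 0$, via the observation that $t^2-4$ is not a square when $|t|>2$. The paper simply presupposes it when it invokes ``the unique quadratic equation'' satisfied by $\alpha$.
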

\begin{proof}
Let $G_\alpha=\{g\in G\ |\ g\cdot \alpha=\alpha\}$.
Let $M=\left(\begin{matrix}
a & b  \\
c & d 
\end{matrix}\right)$, $M\cdot \alpha=\alpha$. Then
$\left(\begin{matrix}
\alpha  \\
1
\end{matrix}\right)$ is the eigenvector of $M$ belonging to eigenvalue $c\alpha+d$.
$\forall X\in G^M$, we have that $X\left(\begin{matrix}
\alpha  \\
1
\end{matrix}\right)$ is the eigenvector of $M$ belonging to eigenvalue $c\alpha+d$.
Whence, there exist $0\neq \mu \in \mathbbm{k}$ such that 
\begin{flalign*}
X\left(\begin{matrix}
\alpha  \\
1
\end{matrix}\right)=\mu\left(\begin{matrix}
\alpha  \\
1
\end{matrix}\right).
\end{flalign*}
That is, $X\cdot \alpha=\alpha$. Thus $G^M\subseteq G_\alpha$.

Let $g=\left(\begin{matrix}
p & q  \\
r & s
\end{matrix}\right)\in G_\alpha$. By $g\cdot \alpha=\alpha$, $M\cdot \alpha=\alpha$, we have that
\begin{flalign*}
r\alpha^2+(s-p)\alpha-q=0, \ \ c\alpha^2+(d-a)\alpha-b=0.
\end{flalign*}
Because $\alpha$ satisfies the unique quadratic equation with integer coefficients,
there exist $\lambda \in \mathbbm{k}$ such that 
\begin{flalign*}
r=\lambda c,\ s-p=\lambda(d-a),\ q=\lambda b.
\end{flalign*}
Therefore, $gM=Mg$. That is $G_\alpha\subseteq G_M$.

\end{proof}

Here are a few lemmas about regular continued fractions, and the proof is fairly straightforward.

\begin{lem}\label{countdown}
Let $x=[c_0,c_1,c_2,\cdots]$ be regular continued fractions.  Then 

$(1)$ When $x>0$, $c_0\geq 0$. 
$$
\frac{1}{x}=\left\{
  \begin{aligned}
        &[0,c_0,c_1,c_2,\cdots], &c_0>0,\\
        &[c_1,c_2,c_3,\cdots], & c_0=0,
  \end{aligned}
\right.
$$

$(2)$ 
$$
-x=\left\{
 \begin{aligned} 
&[-c_0-1,1,c_1-1,c_2,c_3,\cdots],  &  c_1>1, \\
&  [-c_0-1,c_2+1,c_3,c_4,\cdots], &  c_1=1,
\end{aligned}
\right.
$$

\end{lem}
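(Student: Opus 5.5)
The plan is to prove both parts directly from the defining recursion of regular continued fractions: if $x=[c_0,c_1,c_2,\cdots]$ with $c_0\in\mathbb{Z}$ and $c_k\geq 1$ for $k\geq 1$, then $x=c_0+\frac{1}{[c_1,c_2,\cdots]}$ and $[c_1,c_2,\cdots]\geq 1$. The argument rests on uniqueness of the regular expansion. In the finite case this is Theorem 1.17 of \cite{K}, quoted in the Preliminaries, under the convention of the Remark that the last partial quotient exceeds $1$. In the infinite case the expansion is unique outright. So in each case it suffices to exhibit a candidate list of partial quotients, check that it is regular, and check that it evaluates to the required number.

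For $(1)$, note first that $x>0$ forces $c_0=\lfloor x\rfloor\geq 0$, since the tail $[c_1,\cdots]$ lies in $(1,\infty)$ or is absent. If $c_0>0$, then $x>1$, so $\frac1x\in(0,1)$. By the recursion, $[0,c_0,c_1,\cdots]=0+\frac{1}{x}$. All entries after the first are positive, so this expansion is regular and is therefore the expansion of $\frac1x$. If $c_0=0$, then $x=\frac{1}{[c_1,c_2,\cdots]}$, hence $\frac1x=[c_1,c_2,\cdots]$. This is regular because $c_1\geq1$ is an integer and the later entries are positive. One degenerate situation has to be handled: $x=[0,1]$ is excluded by the convention that the last quotient exceeds $1$. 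I will remark on it but not dwell on it.

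For $(2)$, write $y=[c_1,c_2,\cdots]\geq 1$, so $x=c_0+\frac1y$ and $-x=(-c_0-1)+\left(1-\frac1y\right)$. Everything reduces to expanding $1-\frac1y=\frac{y-1}{y}\in[0,1)$.

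If $c_1>1$, then $y-1=[c_1-1,c_2,\cdots]\geq1$, and
\begin{flalign*}
1-\frac1y=\frac{1}{\frac{y}{y-1}}=\frac{1}{1+\frac{1}{y-1}}.
\end{flalign*}
This gives $-x=[-c_0-1,1,c_1-1,c_2,\cdots]$, which is regular because $c_1-1\geq1$.

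If $c_1=1$, write $z=[c_2,c_3,\cdots]$, so $y=1+\frac1z$. Then $\frac{y}{y-1}=z+1=[c_2+1,c_3,\cdots]$, and therefore $-x=[-c_0-1,c_2+1,c_3,\cdots]$.

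The main obstacle is bookkeeping at the boundary cases rather than any real difficulty. When the expansion of $x$ stops early (at $c_0$ or at $c_1$), the formulas must be read appropriately. If $x=[c_0]$ is an integer, then $-x=[-c_0]$ and the formula does not apply, so I will state that $(2)$ assumes $c_1$ exists. If $x=[c_0,c_1]$ with $c_1>1$, we get $-x=[-c_0-1,1,c_1-1]$. This is regular only if $c_1-1>1$; when $c_1=2$ it ends in $1$ and should be merged via Theorem 1.17 of \cite{K} into $[-c_0-1,2]$. The case $c_1=1$ with no $c_2$ cannot occur by the Remark. I will handle these finite-termination cases explicitly after the generic computation, and then invoke uniqueness to conclude.
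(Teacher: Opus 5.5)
Your proposal is correct and is the direct verification the paper intends: the paper writes out no proof of Lemma~\ref{countdown}, only remarking that it is fairly straightforward, and your reduction to $x=c_0+1/y$ and $-x=(-c_0-1)+\frac{y-1}{y}$, split into the cases $c_1>1$ and $c_1=1$, is that computation. One small boundary slip: in $(1)$ with $c_0>0$ you only get $x\geq 1$ (not $x>1$), so for $x=[1]$ the output $[0,1]$ has to be merged into $[1]$, just as you merge $[-c_0-1,1,1]$ into $[-c_0-1,2]$ in $(2)$.
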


\begin{lem}\label{ccf}
Let $[a_1,a_2,\cdots,a_n]$ be a regular continued fraction, and $\frac{p_j}{q_j} (1\leq j\leq n)$  
be convergent fractions. Let $\gamma$ be positive integer. Then
\begin{flalign*}
[a_1,a_2,\cdots,a_k,\gamma]=\frac{p_k\gamma+p_{k-1}}{q_k\gamma+q_{k-1}},\ \ k=1,2,\cdots,n.
\end{flalign*}
where $p_0=1$, $q_0=0$.
\end{lem}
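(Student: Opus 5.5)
The identity is the standard convergent recursion, and I would prove it by induction on $k$, using the recurrences $p_k=a_kp_{k-1}+p_{k-2}$ and $q_k=a_kq_{k-1}+q_{k-2}$. These recurrences hold with the conventions $p_0=1$, $q_0=0$, together with $p_{-1}=0$, $q_{-1}=1$; they are the same ones already used implicitly in the proof of Theorem \ref{cf}. In fact the quickest route goes through Theorem \ref{cf} itself. Since $S$ acts on $C^*$ by $\tau\mapsto -1/\tau$, the product $T^{a}S$ acts by $\tau\mapsto a-1/\tau$. One checks that $N_k$ acting on $-\gamma$ (with appropriate sign bookkeeping from the factors $T^{(-1)^{j-1}a_j}$) yields $[a_1,\dots,a_k,\gamma]$.

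I prefer to avoid that sign bookkeeping and argue directly. Regard $[a_1,\dots,a_k,x]$ as a function of a positive real variable $x$, and prove the stronger statement
\[
[a_1,\dots,a_k,x]=\frac{p_kx+p_{k-1}}{q_kx+q_{k-1}}
\]
for all real $x>0$. The base case $k=1$ is immediate: $[a_1,x]=a_1+\frac1x=\frac{a_1x+1}{x}=\frac{p_1x+p_0}{q_1x+q_0}$, since $p_1=a_1$, $q_1=1$, $p_0=1$, $q_0=0$.

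For the inductive step, use the nesting identity $[a_1,\dots,a_k,x]=[a_1,\dots,a_{k-1},a_k+\frac1x]$. The quantity $a_k+\frac1x$ is positive because $a_k\ge1$ for $k\ge2$, so the inductive hypothesis applies to it and gives
\[
\frac{p_{k-1}(a_k+\frac1x)+p_{k-2}}{q_{k-1}(a_k+\frac1x)+q_{k-2}}.
\]
Multiply numerator and denominator by $x$ and apply the recurrences. This gives $\frac{p_kx+p_{k-1}}{q_kx+q_{k-1}}$. Specializing $x=\gamma$ then yields the lemma.

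The only point needing care is making sure the recurrences for $p_k,q_k$ are established or accepted with the stated initial values $p_0=1$, $q_0=0$. Since the paper defines $p_j/q_j$ only as the value of $[a_1,\dots,a_j]$, I would fix $p_j,q_j$ by the recurrence; this gives coprime integers by the determinant identity implicit in Theorem \ref{cf}, where $\det N_n=1$. I would also note that the positivity of $\gamma$ and of the $a_j$ guarantees that no denominator vanishes.
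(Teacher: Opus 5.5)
Your proof is correct. The paper omits the proof of this lemma as ``fairly straightforward,'' and your induction on $k$, strengthened to all real $x>0$ so the hypothesis can be applied at $a_k+\frac1x$, is the standard argument left out; the real-variable version is in fact what Lemma \ref{G orbit} uses, since there $\gamma=[\overline{b_1,\dots,b_n}]$ is not an integer. One small slip in the aside you discard: by Theorem \ref{cf}, $N_k\cdot z=\frac{p_kz+(-1)^kp_{k-1}}{q_kz+(-1)^kq_{k-1}}$, so $N_k$ must be evaluated at $(-1)^k\gamma$, not at $-\gamma$.
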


\begin{lem}\label{G orbit}
Let $\alpha=[a_1,a_2,\cdots,a_m,\overline{b_1,b_2,\cdots,b_n}]$ be periodic regular continued fraction. Then
$$
G\cdot \alpha=[c_1,c_2,\cdots,c_k,\overline{b_1,b_2,\cdots,b_n}], k\geq 1,\left\{
  \begin{aligned}
        &\ k\equiv m (\rm{mod}\ 2),   n\ is\  even, \\
        &\ k\in \mathbb{Z} ,   n\ is\  odd,
  \end{aligned}
\right.
$$
where $c_1$ is integer, $c_2,c_3,\cdots,c_k$ are positive integers. 
\end{lem}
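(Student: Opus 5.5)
The plan is to prove the two inclusions separately. Write $R$ for the set on the right-hand side, and let $\zeta=[\overline{b_1,b_2,\cdots,b_n}]$ be the purely periodic tail.

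\textbf{Step 1: $R\subseteq G\cdot\alpha$.} This direction is cleaner. By Lemma \ref{ccf}, applied with the tail $\zeta$ in place of $\gamma$ (the formula holds for any real $\gamma>1$ by the same recursion), any $\beta=[c_1,\cdots,c_k,\zeta]$ satisfies
$$\beta=\frac{p_k\zeta+p_{k-1}}{q_k\zeta+q_{k-1}}=M_\beta\cdot\zeta,\qquad M_\beta=\left(\begin{matrix} p_k & p_{k-1}\\ q_k & q_{k-1}\end{matrix}\right),\qquad \det M_\beta=(-1)^{k}.$$
Equivalently, by Theorem \ref{cf}, $M_\beta$ agrees with $N_k$ up to sign and a column sign change. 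In the same way $\alpha=M_\alpha\cdot\zeta$ with $\det M_\alpha=(-1)^m$. Hence $\beta=(M_\beta M_\alpha^{-1})\cdot\alpha$, and this matrix has determinant $(-1)^{k+m}$, which equals $1$ when $k\equiv m \pmod 2$. This settles the case $n$ even.

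When $n$ is odd, the periodicity $\zeta=[b_1,\cdots,b_n,\zeta]$ gives $\zeta=P\cdot\zeta$ with $\det P=(-1)^n=-1$. Inserting $P$ as $M_\beta P M_\alpha^{-1}$ therefore repairs the parity, so every $k$ is allowed.

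\textbf{Step 2: $G\cdot\alpha\subseteq R$.} Since $G=\langle S,T\rangle$, it suffices to show that $R$ (with its parity condition) is stable under $T^{\pm1}$ and $S$; note $S^{-1}=-S$ acts as $S$. For $T^{\pm1}$ this is immediate: only $c_1$ changes, to $c_1\pm1$, and $k$ is unchanged.

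For $S\colon x\mapsto -1/x$, apply Lemma \ref{countdown}. If $x>0$, then $1/x$ is obtained either by prepending $0$ (length $k+1$) or by deleting $c_1=0$ (length $k-1$). Negating via part (2) then either adds one term or removes one term. Hence the prefix length changes by $-2$, $0$ or $+2$, and the parity of $k$ is preserved. If $x<0$, write $-1/x=1/(-x)$: first negate, changing the length by $\pm1$, then invert, changing it by $\pm1$ again, for the same conclusion.

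\textbf{Main obstacle.} I expect the real difficulty to be the boundary cases in Step 2, where the operations of Lemma \ref{countdown} reach into the periodic part. This happens when $k$ is $1$ or $2$, or when the prefix entry being modified becomes $0$ or gets merged, as in the $c_1=1$ case of part (2). The remedy I would try first is to unroll one or two full periods into the prefix before applying $S$, that is, to rewrite $[c_1,\cdots,c_k,\overline{b}]$ as $[c_1,\cdots,c_k,b_1,\cdots,b_n,\overline{b}]$. This changes $k$ by $n$, so for $n$ even it does not affect parity, and for $n$ odd parity is irrelevant. After unrolling, all manipulations stay inside the prefix, and the resulting expansion is again regular, with integer first term and positive later terms, and with tail $\overline{b_1,\cdots,b_n}$.

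Alternatively, one can bypass the generator check entirely. Step 1 already shows that $G\cdot\alpha$ is the set of $g\cdot\zeta$ with $g$ running over $GL(2,\mathbb{Z})$ matrices of determinant $(-1)^m$, adjusted by the stabiliser of $\zeta$. Serret's classical theorem says $GL(2,\mathbb{Z})$-equivalent irrationals have eventually equal expansions, and the determinant computation of Step 1 then forces the parity constraint when $n$ is even.
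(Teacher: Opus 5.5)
Your main argument is correct and is essentially the paper's proof. Step 1 reproduces the paper's matrices $M_\beta M_\alpha^{-1}$ and $M_\beta P M_\alpha^{-1}$ (the paper's factor $(-1)^m\,\mathrm{adj}(M_\alpha)$ is just $M_\alpha^{-1}$), and Step 2 spells out the paper's one-line appeal to Lemma \ref{countdown}, with your unrolling of full periods correctly handling the boundary cases the paper passes over.

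Only the closing Serret alternative is incomplete as written. Forcing $k\equiv m$ there needs the extra fact that the $GL(2,\mathbb{Z})$-stabiliser of $\zeta$ is generated, up to sign, by the minimal-period matrix, so rely on the generator check instead.
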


\begin{proof}
Let $\mathcal{O}$ be the set of right side of equation. 
$G\cdot \alpha\subseteq \mathcal{O}$ can be concluded form Lemma $\ref{countdown}$.

Let $\beta=[c_1,c_2,\cdots,c_k,\overline{b_1,b_2,\cdots,b_n}]\in \mathcal{O}$, $\gamma=[\overline{b_1,b_2,\cdots,b_n}]$, and 
\begin{align}\label{abc}
  \begin{split}
&[c_1,c_2,\cdots,c_i]=\frac{p_i}{q_i},\ i\in \mathbb{I}_{1,k},
\  [a_1,a_2,\cdots,a_j]=\frac{p_j'}{q_j'},\ j\in \mathbb{I}_{1,m},\\
&[b_1,b_2,\cdots,b_l]=\frac{p_l''}{q_l''},\ l\in \mathbb{I}_{1,n}.
  \end{split}
\end{align}
Then by Lemma $\ref{ccf}$, we conclude that 
\begin{flalign*}
\beta=\left(\begin{matrix}
p_k & p_{k-1}  \\
q_k & q_{k-1}
\end{matrix}\right)\gamma, \ \
\alpha=\left(\begin{matrix}
p_m' & p_{m-1}'  \\
q_m' & q_{m-1}'
\end{matrix}\right)\gamma,\ \ 
\gamma=\left(\begin{matrix}
p_n'' & p_{n-1}''  \\
q_n' & q_{n-1}''
\end{matrix}\right)\gamma.
\end{flalign*}
When  $k\equiv m (\rm{mod}\ 2)$, let
\begin{flalign*}
g=(-1)^m\left(\begin{matrix}
p_k & p_{k-1}  \\
q_k & q_{k-1}
\end{matrix}\right)
\left(\begin{matrix}
q_{m-1}'  & -p_{m-1}'  \\
-q_m' & p_m'
\end{matrix}\right).
\end{flalign*}
When  $n$  is odd, $k\not\equiv m (\rm{mod}\ 2)$, let
\begin{flalign*}
g=(-1)^m\left(\begin{matrix}
p_k & p_{k-1}  \\
q_k & q_{k-1}
\end{matrix}\right)
\left(\begin{matrix}
p_n'' & p_{n-1}''  \\
q_n' & q_{n-1}''
\end{matrix}\right)
\left(\begin{matrix}
q_{m-1}'  & -p_{m-1}'  \\
-q_m' & p_m'
\end{matrix}\right).
\end{flalign*}
We have that $g\in G$ and $\beta=g\cdot \alpha$. Thus $\mathcal{O}\subseteq G\cdot \alpha$.

\end{proof}

\begin{thm}\label{hyp}
Let $M$ be hyperbolic, $\alpha=[a_1,a_2,\cdots,a_m,\overline{b_1,b_2,\cdots,b_n}]$ be the fixed point of $M$. 
Then a representative of complete cosets of $G$ over $G^M$ is
\begin{flalign*}
&\left\{(-1)^m\left(\begin{matrix}
p_k & p_{k-1}  \\
q_k & q_{k-1}
\end{matrix}\right)
\left(\begin{matrix}
q_{m-1}'  & -p_{m-1}'  \\
-q_m' & p_m'
\end{matrix}\right)\middle| k\equiv m (\rm{mod}\ 2)\right\},\\
&\left\{ (-1)^m\left(\begin{matrix}
p_k & p_{k-1}  \\
q_k & q_{k-1}
\end{matrix}\right)
\left(\begin{matrix}
p_n'' & p_{n-1}''  \\
q_n' & q_{n-1}''
\end{matrix}\right)
\left(\begin{matrix}
q_{m-1}'  & -p_{m-1}'  \\
-q_m' & p_m'
\end{matrix}\right)\middle|n\ is\ odd,\  k\not\equiv m (\rm{mod}\ 2)\right\},      
\end{flalign*}
where $[c_1,c_2,\cdots,c_k]$ is the regular continued fraction of arbitrary rational number, 
$k\geq 1,\ k\in \mathbb{Z}$,
and satisfying equation $(\ref{abc})$.
\end{thm}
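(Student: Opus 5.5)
The plan is to mimic the proof of Theorem \ref{rep}: turn the coset problem into an orbit problem, and then use Lemma \ref{G orbit} to describe the orbit.

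\textbf{Step 1: cosets as orbit points.} By Lemma \ref{stable}, $G^M$ is exactly the stabilizer $G_\alpha$ of the fixed point $\alpha$ under the modular action. Hence
\begin{flalign*}
G/G^M\longrightarrow G\cdot\alpha,\qquad gG^M\longmapsto g\cdot\alpha
\end{flalign*}
is a well-defined bijection. So a complete set of coset representatives is obtained by choosing, for each point $\beta\in G\cdot\alpha$, one $g_\beta\in G$ with $g_\beta\cdot\alpha=\beta$. Distinct $\beta$ then automatically give distinct cosets.

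\textbf{Step 2: parametrize the orbit.} By Lemma \ref{G orbit}, the orbit $G\cdot\alpha$ consists of the numbers
\begin{flalign*}
\beta=[c_1,\dots,c_k,\overline{b_1,\dots,b_n}],
\end{flalign*}
where $k\equiv m \pmod 2$ when $n$ is even, and $k$ is arbitrary when $n$ is odd. Using Lemma \ref{ccf} and the notation $(\ref{abc})$, we have $\beta=P_k\gamma$ and $\alpha=P'_m\gamma$, where $P_k$ and $P'_m$ are the convergent matrices. Their determinants are $\pm1$: $\det P_k=(-1)^k$ and $\det P'_m=(-1)^m$.

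\begin{itemize}
\item If $k\equiv m\pmod 2$, take $g=(-1)^m P_k\,\mathrm{adj}(P'_m)$. Its determinant is $(-1)^k(-1)^m=1$, so $g\in G$, and it sends $\alpha$ to $\beta$.
\item If $n$ is odd and $k\not\equiv m\pmod 2$, insert the period matrix $P''_n$, which fixes $\gamma$ and has determinant $-1$. This corrects the parity, and again $g\in G$ with $g\cdot\alpha=\beta$.
\end{itemize}

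These are exactly the matrices listed in the statement, and they are the same ones used in the proof of Lemma \ref{G orbit}.

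\textbf{Step 3: injectivity of the parametrization (main obstacle).} It remains to check that distinct admissible data $(c_1,\dots,c_k)$ give distinct $\beta$, so that no coset is listed twice. The tool is uniqueness of the infinite regular continued fraction expansion of an irrational number. The subtlety is that the same $\beta$ can be written with different preperiods. For example, $[c_1,\dots,c_k,b_1,\dots,b_n,\overline{b_1,\dots,b_n}]$ is again of the admissible form, and if $c_k=b_n$ one can also absorb a preperiod term into the period.

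To handle this, I would first normalize. Write each $\beta$ with the shortest preperiod, in the tail form $\overline{b_1,\dots,b_n}$, so that $(c_1,\dots,c_k)$ is read off uniquely from $\beta$'s expansion. The statement's index set should then be understood as these minimal preperiods, or equivalently as one representative per $\beta$. Uniqueness of the regular expansion then gives injectivity, and together with Steps 1–2 it yields a complete and irredundant set of coset representatives.

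I expect this bookkeeping of non-unique preperiods to be the only delicate point. If needed, I would fix the convention that $c_k\neq b_n$ whenever $k>m$ is allowed to shrink.
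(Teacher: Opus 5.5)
Your Steps 1 and 2 are exactly the paper's proof. That proof consists only of invoking Lemma \ref{stable}, which gives $G/G^M\cong G\cdot\alpha$, together with the explicit matrices from the proof of Lemma \ref{G orbit}. The paper never addresses irredundancy, so your Step 3 adds to its argument rather than departing from it. You are right that the list as literally written repeats cosets. Your main normalization gives a transversal: take the least $k\ge 1$ for which the tail of $\beta$ is exactly $\overline{b_1,\dots,b_n}$, with $n$ the minimal period, so that admissible $k$ differ by multiples of $n$ and the parity condition holds automatically.

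Two corrections to Step 3.

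\emph{The fallback convention $c_k\neq b_n$ is wrong and should be dropped.} Because the period must start exactly at $b_1$, the only source of redundancy is appending or deleting a whole block $(b_1,\dots,b_n)$. Absorbing a single $c_k=b_n$ rotates the period out of the admissible form unless $n=1$. As a result, $c_k\neq b_n$ can discard every representative of a coset. For example, all admissible preperiods of $[3,4,2,\overline{1,2}]$ have lengths $3,5,7,\dots$ and all end in $b_2=2$, so for $m=1$ that coset would be lost. The correct condition is $(c_{k-n+1},\dots,c_k)\neq(b_1,\dots,b_n)$ whenever $k>n$.

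\emph{The index set must be the sequences of Lemma \ref{G orbit}, not first-form expansions of rational numbers.} Minimal preperiods can end in $1$. For example, with $(b_1,b_2)=(2,1)$ and $m$ odd, every admissible preperiod of $[4,1,1,\overline{2,1}]$ ends in $1$. Under the paper's first-form convention such cosets would be missed, so your sentence ``these are exactly the matrices listed in the statement'' only holds under your reinterpretation of the index set.
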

\begin{proof}
The claim follows by Lemmas $\ref{stable}$ and $\ref{G orbit}$.
\end{proof}

\begin{cor}
Based on the classification of representative of complete cosets of $G$ in Theroem \ref{rep} and \ref{hyp},
we conclude that there is not finite dimensional Nichols algebras over modular group.
\end{cor}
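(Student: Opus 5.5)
The plan is to reduce the corollary to a statement about the \emph{support} of a Yetter-Drinfeld module and then read off the size of each conjugacy class from Theorems \ref{rep} and \ref{hyp}. Let $V\in{}^G_G\mathcal{YD}$ be finite dimensional and nonzero. Since $V=\bigoplus_{s\in G}V_s$ and $g\cdot V_h=V_{ghg^{-1}}$, every conjugacy class meeting the support of $V$ must be finite. Moreover, if $W\subseteq V$ is a Yetter-Drinfeld submodule that is a direct summand, then $\mathfrak{B}(W)$ embeds in $\mathfrak{B}(V)$. So it suffices to treat irreducible $V=M(g,\rho)$ with $\mathcal{O}_g$ finite and $\rho$ a finite-dimensional irreducible representation of $G^g$ (\cite[Lemma 1.4.16, 1.4.18]{HS}).

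The first step is to show that $\mathcal{O}_g$ is infinite unless $g=\pm I$. We have $|\mathcal{O}_g|=[G:G^g]$, and by Section 3 every $g\neq\pm I$ is conjugate to one of $\pm S$, $\pm ST$, $\pm (ST)^2$, $\pm T^n$ with $n\neq0$, or a hyperbolic element. I will check each case.
\begin{itemize}
\item For $S$, $ST$ and $T^n$, Theorem \ref{rep} exhibits infinitely many distinct coset representatives. For instance, $T^{a_1}S$ for $a_1\in\mathbb{Z}$ give pairwise distinct points $T^{a_1}S\cdot i$, $T^{a_1}S\cdot\omega$ and $T^{a_1}S\cdot\infty=a_1$ in the respective orbits.
\item For hyperbolic $M$, Lemma \ref{stable} identifies $G^M$ with the stabilizer of the fixed point $\alpha$, so $[G:G^M]=|G\cdot\alpha|$. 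Lemma \ref{G orbit} shows this orbit contains all $[c_1,\overline{b_1,\dots,b_n}]$ (adjusting parity of $k$ as required) with $c_1\in\mathbb{Z}$ arbitrary, which is an infinite set.
\end{itemize}
Signs are harmless, since $G^{-g}=G^g$. Hence a finite-dimensional $V$ is supported on the center $\{I,-I\}$.

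The second step is to compute the Nichols algebra when $\operatorname{supp}V\subseteq\{\pm I\}$ and $V$ is irreducible, so $V=V_g$ with $g=\pm I$ central. Then $\rho(g)$ acts on $V$ by a scalar $\epsilon$ with $\epsilon^2=1$, and the braiding is $c(x\otimes y)=g\cdot y\otimes x=\epsilon\, y\otimes x$. If $\epsilon=1$ (in particular whenever $g=I$), $\mathfrak{B}(V)=S(V)$, which is infinite dimensional.

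This is where I expect the main obstacle. The case $g=-I$ with $\rho(-I)=-1$ gives $\mathfrak{B}(V)=\Lambda(V)$, which is finite dimensional. So the corollary cannot be proved literally as stated. What the argument genuinely proves is that there is no finite-dimensional Nichols algebra over $G$ whose Yetter-Drinfeld module has support meeting a non-central conjugacy class. Equivalently, every finite-dimensional Nichols algebra over $\mathrm{SL}(2,\mathbb{Z})$ is an exterior algebra of a module concentrated in degree $-I$. I would therefore first try to confirm which reading the author intends, presumably support on a non-trivial (non-central) conjugacy class, and state the result in that form, recording the exterior-algebra exception explicitly.
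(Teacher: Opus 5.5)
Your objection is correct, and it is the main point here. The paper gives no argument beyond the one-line appeal to Theorems~\ref{rep} and~\ref{hyp}. What that appeal yields is exactly your first step. Every non-central conjugacy class is infinite. So a nonzero Yetter--Drinfeld module whose support meets such a class is infinite dimensional, because $g$ maps $V_h$ isomorphically onto $V_{ghg^{-1}}$. Hence its Nichols algebra, which contains $V$ in degree one, is infinite dimensional too.

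Those two theorems only concern centralizers of non-central elements. The classes $\{I\}$ and $\{-I\}$ are never treated; the paper even restricts to $n\neq 0$ for $G^{T^n}$. Your exception is genuine, and you should exhibit it concretely. Take the defining representation $V=\mathbbm{k}^2$ placed in degree $-I$. It is a Yetter--Drinfeld module because $-I$ is central, its braiding is $c(x\otimes y)=-y\otimes x$, and $\mathfrak{B}(V)=\Lambda(V)$ has dimension $4$.

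A one-dimensional witness also exists. The abelianization is $\mathbb{Z}/12$, and $-I=S^2$ maps to its element of order $2$. Checking that such a character is well defined needs the relation $S^2=(ST)^3$, which is missing from the paper's presentation. So the corollary is false as written, and only your restricted version follows from the paper's results.

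You were also right to carry the signs. $S$ and $S^{-1}=-S$ are not conjugate in $\mathrm{SL}(2,\mathbb{Z})$: their attached forms $cx^2+(d-a)xy-by^2$ are $\pm(x^2+y^2)$, of opposite definiteness. So there are six elliptic classes, not the three listed in Section~3.

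Two repairs are needed.

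First, your reduction to irreducible direct summands is not justified, since finite-dimensional Yetter--Drinfeld modules over this infinite group need not be semisimple. It is also unnecessary. For any braided subspace $W\subseteq V$, in particular any Yetter--Drinfeld submodule, the quantum symmetrizers of $W$ are restrictions of those of $V$. Hence $\mathfrak{B}(W)\hookrightarrow\mathfrak{B}(V)$ with no complement required. Now suppose $\operatorname{supp}V\subseteq\{\pm I\}$.
\begin{itemize}
\item A nonzero $x\in V_I$ spans a braided subspace with trivial braiding, so it generates a polynomial subalgebra.
\item The same holds for any $x\in V_{-I}$ fixed by $-I$; note that $-I$ acts by a diagonalizable involution.
\item Otherwise $V=V_{-I}$ with $-I$ acting by $-1$, and $\mathfrak{B}(V)=\Lambda(V)$.
\end{itemize}
This proves your ``equivalently'' characterization for every finite-dimensional $V$.

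Second, showing the classes are infinite needs no continued fractions. The points $T^j\cdot z=z+j$ are pairwise distinct for any $z\neq\infty$. This settles elliptic and hyperbolic fixed points at once, so the parity bookkeeping of Lemma~\ref{G orbit} is irrelevant.

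Note also that $ST$ fixes $e^{2\pi i/3}$, not $\omega=e^{\pi i/3}$; the stabilizer of $\omega$ is $\langle TS\rangle=S^{-1}\langle ST\rangle S$. So your points $T^{a}S\cdot\omega$ count cosets of a conjugate of $G^{ST}$. The index is the same, but the identification should be stated correctly.
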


\begin{center}
$\mathbf{ACKNOWLEDGMENT}$
\end{center}

The author would  like to thank Professor Gongxiang Liu for useful comments and suggestions. 
The  author is supported by the  NSFC (Grant No. 12201164).

\bibliographystyle{amsalpha}

\end{document}